\newtheorem{theorem}{Theorem}[section]
\newtheorem*{theorem A}{Theorem A}
\newtheorem*{theorem B}{N\"olker's Theorem}
\theoremstyle{remark}
\theoremstyle{remark}
\theoremstyle{definition}
\numberwithin{equation}{section}
\def \({\left ( }
\def \){\right )}
\def \<{\left < }
\def \>{\right >}
\begin{document}
\title{\textbf{Semi-parallel Meridian Surfaces in }$\mathbb{E}^{4}$}
\author{Bet\"{u}l Bulca \& Kadri Arslan}
\address{Uluda\u{g} University, Art and Science Faculty, Department of
Mathematics, Bursa-TURKEY}
\email{bbulca@uludag.edu.tr; arslan@uludag.edu.tr}
\thanks{}
\subjclass[2000]{ 53C15, 53C40}
\date{March 25, 2015 }
\dedicatory{}
\keywords{Gaussian curvature, Meridian surface, Semi-parallel surface.}

\begin{abstract}
In the present article we study a special class of surfaces in the
four-dimensional Euclidean space, which are one-parameter systems of
meridians of the standard rotational hypersurface. They are called meridian
surfaces. We classified semi-parallel meridian surface in $4$-dimensional
Euclidean space $\mathbb{E}^{4}.$
\end{abstract}

\maketitle

\section{Introduction}

Let $M$ be a submanifold of a $n$-dimensional Euclidean space $\mathbb{E}%
^{n}.$ Denote by $\overline{R}$ the curvature tensor of the Vander
Waerden-Bortoletti connection $\overline{\nabla }$ of $M$ and $h$ is the
second fundamental form of $M$ in $\mathbb{E}^{n}.$ The submanifold $M$ is
called semi-parallel (or semi-symmetric \cite{Lu}) if $\overline{R}\cdot h=0$
\cite{De}. This notion is an extrinsic analogue for semi-symmetric spaces,
i.e. Riemannian manifolds for which $R\cdot R=0$ and a direct generalization
of parallel submanifolds, i.e. submanifolds for which $\overline{\nabla }%
h=0. $ In \cite{De} J. Deprez showed the fact that the submanifold $M\subset 
\mathbb{E}^{n}$ is semi-parallel implies that $(M,g)$ is semi-symmetric. For
references on semi-symmetric spaces, see \cite{Sz}; for references on
parallel immersions, see \cite{Fe}. In \cite{De} J. Deprez gave a local
classification of semi-parallel hypersurfaces in Euclidean $n$-space $%
\mathbb{E}^{n}$.

Recently, the present authors considered the Wintgen ideal surfaces in
Euclidean $n$-space $\mathbb{E}^{n}.$ They showed that Wintgen ideal
surfaces in $\mathbb{E}^{n}$ satisfying the semi-parallelity condition 
\begin{equation}
\overline{R}(X,Y)\cdot h=0  \label{a.1}
\end{equation}%
are of flat normal connection \cite{BA1}. Further, \ the same authors in 
\cite{BA2} proved that the tensor product surfaces in $\mathbb{E}^{4}$
satisfying the semi-parallelity condition (\ref{a.1}) are totally umbilical.

In \cite{GM5} Ganchev and Milousheva constructed special two dimensional
surfaces which are one-parameter of meridians of the rotation hypersurfaces
in $\mathbb{E}^{4}$ and called these surfaces \textit{meridian surfaces. }%
The geometric construction of the meridian surfaces is different from the
construction of the standard rotational surfaces with two dimensional axis
in $\mathbb{E}^{4}$ \cite{GM1}$.$ The same authors classified the meridian
surfaces with constant Gauss curvature ($K\neq 0$) and constant mean
curvature $H$ \cite{GM5}. Recently, meridian surfaces with $1$-type Gauss
map is characterized by the present authors and Milousheva in \cite{BAM}.
Further, meridian surfaces were studied in \cite{GM2} as a surface in
Minkowski $4$-space. For more details see also \cite{GM3}, \cite{GM4} and 
\cite{OBBA}.

In the present study we consider the meridian surfaces in $4$-dimensional
Euclidean space $\mathbb{E}^{4}.$ We gave a classification of the meridian
surfaces in $4$-dimensional Euclidean space $\mathbb{E}^{4}$ satisfying the
semi-parallelity condition (\ref{a.1}).

\section{Basic Concepts}

Let $M$ be a smooth surface in n-dimensional Euclidean space $\mathbb{E}^{n}$
given with the surface patch $X(u,v)$ : $(u,v)\in D\subset \mathbb{E}^{2}$.
The tangent space to $M$ at an arbitrary point $p=X(u,v)$ of $M$ span $%
\left
\{ X_{u},X_{v}\right \} $. In the chart $(u,v)$ the coefficients of
the first fundamental form of $M$ are given by 
\begin{equation}
E=\left \langle X_{u},X_{u}\right \rangle ,F=\left \langle X_{u},X_{v}\right
\rangle ,G=\left \langle X_{v},X_{v}\right \rangle ,  \label{B1}
\end{equation}%
where $\left \langle ,\right \rangle $ is the Euclidean inner product. We
assume that $W^{2}=EG-F^{2}\neq 0,$ i.e. the surface patch $X(u,v)$ is
regular.\ For each $p\in M$, consider the decomposition $T_{p}\mathbb{E}%
^{n}=T_{p}M\oplus T_{p}^{\perp }M$ where $T_{p}^{\perp }M$ is the orthogonal
component of the tangent plane $T_{p}M$ in $\mathbb{E}^{n}$, that is the
normal space of $M$ at $p$.

Let $\chi (M)$ and $\chi ^{\perp }(M)$ be the space of the smooth vector
fields tangent and normal to $M$ respectively. Denote by $\nabla $ and $%
\overline{\nabla }$ the Levi-Civita connections on $M$ and $\mathbb{E}^{n},$
respectively. Given any vector fields $X_{i}$ and $X_{j}$ tangent to $M$
consider the second fundamental map $h:\chi (M)\times \chi (M)\rightarrow
\chi ^{\perp }(M);$%
\begin{equation}
h(X_{i},X_{j})=\widetilde{\nabla }_{X_{i}}X_{j}-\nabla _{X_{i}}X_{j};\text{ }%
1\leq i,j\leq 2.  \label{B2}
\end{equation}%
where $\widetilde{\nabla }$ is the induced. This map is well-defined,
symmetric and bilinear.

For any normal vector field $N_{\alpha }$ $1\leq \alpha \leq n-2$ of $M$,
recall the shape operator $A:\chi ^{\perp }(M)\times \chi (M)\rightarrow
\chi (M);$%
\begin{equation}
A_{N_{\alpha }}X_{i}=-\widetilde{\nabla }_{N_{\alpha
}}X_{i}+D_{X_{i}}N_{\alpha };\text{ \  \ }1\leq i\leq 2.  \label{B3}
\end{equation}%
where $D$ denotes the normal connection of $M$ in $\mathbb{E}^{n}$ \cite{Ch1}%
$.$This operator is bilinear, self-adjoint and satisfies the following
equation:%
\begin{equation}
\left \langle A_{N_{\alpha }}X_{i},X_{j}\right \rangle =\left \langle
h(X_{i},X_{j}),N_{\alpha }\right \rangle \text{, }1\leq i,j\leq 2.
\label{B4}
\end{equation}

The equation (\ref{B2}) is called Gaussian formula, and%
\begin{equation}
h(X_{i},X_{j})=\overset{n-2}{\underset{\alpha =1}{\sum }}h_{ij}^{\alpha
}N_{\alpha },\  \  \  \  \ 1\leq i,j\leq 2  \label{B5}
\end{equation}%
where $h_{ij}^{\alpha }$ are the coefficients of the second fundamental form 
$h$ \cite{Ch1}. If $h=0$ then $M$ is called totally geodesic. $M$ is totally
umbilical if all shape operators are proportional to the identity map. $M$
is an isotropic surface if for each $p$ in $M$, $\left \Vert
h(X,X)\right
\Vert $ is independent of the choice of a unit vector $X$ in $%
T_{p}M$.

If we define a covariant differentiation $\overline{\nabla }h$ of the second
fundamental form $h$ on the direct sum of the tangent bundle and normal
bundle $TM\oplus T^{\bot }M$ of $M$ by 
\begin{equation}
(\overline{\nabla }_{X_{i}}h)(X_{j},X_{k})=D_{X_{i}}h(X_{j},X_{k})-h(\nabla
_{X_{i}}X_{j},X_{k})-h(X_{j},\nabla _{X_{i}}X_{k}),  \label{B6}
\end{equation}%
for any vector fields $X_{i}$,$X_{j},X_{k}$ tangent to $M$. Then we have the
Codazzi equation 
\begin{equation}
(\overline{\nabla }_{X_{i}}h)(X_{j},X_{k})=(\overline{\nabla }%
_{X_{j}}h)(X_{i},X_{k}),  \label{B7}
\end{equation}%
where $\overline{\nabla }$ is called the Vander Waerden-Bortoletti
connection of $M$ \cite{Ch1}.

We denote $R$ and $\overline{R}$ the curvature tensors associated with $%
\nabla $ and $D$ respectively;%
\begin{eqnarray}
R(X_{i},X_{j})X_{k} &=&\nabla _{X_{i}}\nabla _{X_{j}}X_{k}-\nabla
_{X_{j}}\nabla _{X_{i}}X_{k}-\nabla _{\lbrack X_{i},X_{j}]}X_{k},  \label{B8}
\\
R^{\bot }(X_{i},X_{j})N_{\alpha } &=&h(X_{i},A_{N_{\alpha
}}X_{j})-h(X_{j},A_{N_{\alpha }}X_{i}).  \label{B9}
\end{eqnarray}

The equation of Gauss and Ricci are given respectively by%
\begin{equation}
\text{ \  \ }\left \langle R(X_{i},X_{j})X_{k},X_{l}\right \rangle =\left
\langle h(X_{i},X_{l}),h(X_{j},X_{k})\right \rangle -\left \langle
h(X_{i},X_{k}),h(X_{j},X_{l})\right \rangle ,  \label{B10}
\end{equation}%
\begin{equation}
\text{\ }\left \langle R^{\bot }(X_{i},X_{j})N_{\alpha },N_{\beta }\right
\rangle =\left \langle [A_{N_{\alpha }},A_{N_{\beta }}]X_{i,}X_{j}\right
\rangle ,  \label{B11}
\end{equation}%
for the vector fields $X_{i},X_{j},X_{k}$ tangent to $M$ and $N_{\alpha
},N_{\beta }$ normal to $M$ \cite{Ch1}.

Let us $X_{i}\wedge X_{j}$ denote the endomorphism $X_{k}\longrightarrow
\left \langle X_{j},X_{k}\right \rangle X_{i}-$ $\left \langle
X_{i},X_{k}\right \rangle X_{j}.$ Then the curvature tensor $R$ of $M$ is
given by the equation 
\begin{equation}
R(X_{i},X_{j})X_{k}=\overset{n-2}{\underset{\alpha =1}{\sum }}\left(
A_{N_{\alpha }}X_{i}\wedge A_{N_{\alpha }}X_{j}\right) X_{k}.  \label{B12}
\end{equation}

It is easy to show that 
\begin{equation}
R(X_{i},X_{j})X_{k}=K\left( X_{i}\wedge X_{j}\right) X_{k}.  \label{B13}
\end{equation}%
where $K$ is the Gaussian curvature of $M$ defined by 
\begin{equation}
K=\left \langle h(X_{1},X_{1}),h(X_{2},X_{2})\right \rangle -\left \Vert
h(X_{1},X_{2})\right \Vert ^{2}  \label{B14}
\end{equation}%
(see \cite{GR}).

The normal curvature $K_{N}$ of $M$ is defined by (see \cite{DDVV}) 
\begin{equation}
K_{N}=\left \{ \overset{n-2}{\underset{1=\alpha <\beta }{\sum }}\left
\langle R^{\bot }(X_{1},X_{2})N_{\alpha },N_{\beta }\right \rangle
^{2}\right \} ^{1/2}.  \label{B15}
\end{equation}

We observe that the normal connection $D$ of $M$ is flat if and only if $%
K_{N}=0,$ and by a result of Cartan, this equivalent to the
diagonalisability of all shape operators $A_{N_{\alpha }}$\ of $M$, which
means that $M$ is a totally umbilical surface in $\mathbb{E}^{n}$.

\section{Semi-parallel Surfaces}

Let $M$ a smooth surface in $n$-dimensional Euclidean space $\mathbb{E}^{n}.$
Let $\overline{\nabla }$ be the connection of Vander Waerden-Bortoletti of $%
M $. Denote the tensors $\overline{\nabla }$ by $\overline{R}$ . Then the
product tensor $\overline{R}\cdot h$ of the curvature tensor $\overline{R}$
with the second fundamental form $h$ is defined by%
\begin{eqnarray*}
(\overline{R}(X_{i},X_{j})\cdot h)(X_{k},X_{l}) &=&\overline{\nabla }%
_{X_{i}}(\overline{\nabla }_{X_{j}}h(X_{k},X_{l}))-\overline{\nabla }%
_{X_{j}}(\overline{\nabla }_{X_{i}}h(X_{k},X_{l})) \\
&&-\overline{\nabla }_{[X_{i},X_{j}]}h(X_{k},X_{l}),
\end{eqnarray*}%
for all $X_{i},X_{j},X_{k},X_{l}$ tangent to $M.$

The surface $M$ is said to be semi-parallel if $\overline{R}\cdot h=0,$ i.e. 
$\overline{R}(X_{i},X_{j})\cdot h=0$ (\cite{Lu}, \cite{De}, \cite{Ds}, \cite%
{OAM}). It is easy to see that 
\begin{eqnarray}
(\overline{R}(X_{i},X_{j})\cdot h)(X_{k},X_{l}) &=&R^{\bot
}(X_{i},X_{j})h(X_{k},X_{l})  \label{d1} \\
&&\text{-}h(R(X_{i},X_{j})X_{k},X_{l})\text{-}h(X_{k},R(X_{i},X_{j})X_{l}), 
\notag
\end{eqnarray}

This notion is an extrinsic analogue for semi-symmetric spaces, i.e.
Riemannian manifolds for which $R\cdot R=0$ and a generalization of parallel
surfaces, i.e. $\overline{\nabla }h=0$ \cite{Fe}$.$

Substituting (\ref{B5}) and (\ref{B4}) into (\ref{B9}) we get 
\begin{equation}
R^{\bot }(X_{1},X_{2})N_{\alpha }=h_{12}^{\alpha
}(h(X_{1},X_{1})-h(X_{2},X_{2})+(h_{22}^{\alpha }-h_{11}^{\alpha
})h(X_{1},X_{2}).  \label{d2}
\end{equation}%
Further, by the use of (\ref{B13}) we get 
\begin{equation}
R(X_{1},X_{2})X_{1}=-KX_{2},R(X_{1},X_{2})X_{2}=KX_{1}.  \label{d3}
\end{equation}%
So, substituting (\ref{d2}) and (\ref{d3}) into (\ref{d1}) we obtain%
\begin{eqnarray}
(\overline{R}(X_{1},X_{2})\cdot h)(X_{1},X_{1}) &=&\left( \sum_{\alpha
=1}^{n-2}h_{11}^{\alpha }(h_{22}^{\alpha }-h_{11}^{\alpha })+2K\right)
h(X_{1},X_{2})  \notag \\
&&+\sum_{\alpha =1}^{n-2}h_{11}^{\alpha }h_{12}^{\alpha
}(h(X_{1},X_{1})-h(X_{2},X_{2})),  \notag \\
(\overline{R}(X_{1},X_{2})\cdot h)(X_{1},X_{2}) &=&\left( \sum_{\alpha
=1}^{n-2}h_{12}^{\alpha }(h_{22}^{\alpha }-h_{11}^{\alpha })\right)
h(X_{1},X_{2})  \label{D2} \\
&&+(\sum_{\alpha =1}^{n-2}h_{12}^{\alpha }h_{12}^{\alpha }\text{-}%
K)(h(X_{1},X_{1})\text{-}h(X_{2},X_{2})),  \notag \\
(\overline{R}(X_{1},X_{2})\cdot h)(X_{2},X_{2}) &=&\left( \sum_{\alpha
=1}^{n-2}h_{22}^{\alpha }(h_{22}^{\alpha }-h_{11}^{\alpha })-2K\right)
h(X_{1},X_{2})  \notag \\
&&+\sum_{\alpha =1}^{n-2}h_{22}^{\alpha }h_{12}^{\alpha
}(h(X_{1},X_{1})-h(X_{2},X_{2})).  \notag
\end{eqnarray}%
\qquad Semi-parallel surfaces in $\mathbb{E}^{n}$ are classified by J.
Deprez \cite{De}:

\begin{theorem}
\cite{De} Let $M$ a surface in $n$-dimensional Euclidean space $\mathbb{E}%
^{n}.$ Then $M$ is semi-parallel if and only if locally;

i) $M$ is equivalent to a $2$-sphere, or

ii) $M$ has trivial normal connection, or

iii) $M$ is an isotropic surface in $\mathbb{E}^{5}\subset \mathbb{E}^{n}$
satisfying $\left \Vert H\right \Vert ^{2}=3K.$
\end{theorem}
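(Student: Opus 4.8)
The plan is to analyze the three semi-parallelity equations in (\ref{D2}) as a single tensorial condition on the second fundamental form, and to extract from them a pointwise algebraic classification of the possible shapes of $h$. The guiding principle is that $\overline{R}\cdot h=0$ says that the normal curvature operator $R^{\bot}(X_1,X_2)$ annihilates each value of $h$ once the tangential rotation produced by $K$ is accounted for; this turns a differential-geometric condition into linear-algebraic relations among the coefficients $h_{ij}^{\alpha}$ at each point. So I would first choose, at a fixed point $p$, an adapted orthonormal tangent frame $\{X_1,X_2\}$ that diagonalizes a chosen shape operator and simplifies the coefficients, and then read off the consequences of setting the three right-hand sides in (\ref{D2}) equal to zero.

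The key technical step is to case-split on the normal curvature $K_N$, equivalently on whether the normal connection $D$ is flat. If $K_N=0$, then by the result of Cartan quoted just before the theorem all shape operators are simultaneously diagonalizable, which is exactly case (ii): trivial normal connection. If $K_N\neq 0$, then at least two shape operators fail to commute, so the vectors $h(X_1,X_1)-h(X_2,X_2)$ and $h(X_1,X_2)$ must be linearly independent in the normal space (otherwise $R^{\bot}(X_1,X_2)N_\alpha$ in (\ref{d2}) would vanish identically). Under this independence, the three equations (\ref{D2}) force each scalar coefficient to vanish separately, giving the system
\begin{equation*}
\sum_{\alpha} h_{11}^{\alpha}h_{12}^{\alpha}=0,\quad \sum_{\alpha} h_{22}^{\alpha}h_{12}^{\alpha}=0,\quad \sum_{\alpha}(h_{12}^{\alpha})^{2}=K,\quad \sum_{\alpha}h_{11}^{\alpha}(h_{22}^{\alpha}-h_{11}^{\alpha})+2K=0.
\end{equation*}
These relations say that the three vectors $h(X_1,X_1),h(X_1,X_2),h(X_2,X_2)$ satisfy strong orthogonality and norm constraints; I expect them to pin down an isotropic configuration and, after computing $\|H\|^2=\tfrac14\|h(X_1,X_1)+h(X_2,X_2)\|^2$ and comparing with (\ref{B14}), to yield the relation $\|H\|^2=3K$ characteristic of case (iii).

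Separating cases (i) and (iii) within the non-flat branch is where the real work lies. Having shown the surface is isotropic with $\|H\|^2=3K$, I would distinguish whether the first normal space (the span of all $h(X_i,X_j)$) is two-dimensional or three-dimensional. A two-dimensional first normal space together with the derived constraints should collapse to the umbilical/spherical situation of case (i), the round $2$-sphere; a genuinely three-dimensional first normal space is what forces the surface into an $\mathbb{E}^5\subset\mathbb{E}^n$ and produces the superminimal (isotropic) surface of case (iii). The main obstacle is this last dimension count: showing that the algebraic system leaves no configurations other than the listed three, and in particular that the isotropy condition plus $\|H\|^2=3K$ cannot be realized inside a lower-dimensional ambient space except as the sphere. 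Verifying that the pointwise algebra integrates to the stated global models—rather than merely constraining $h$ at each point—requires invoking the Codazzi equation (\ref{B7}) and the Gauss equation (\ref{B10}), so I would close by checking that the pointwise normal forms are compatible with these integrability conditions and hence genuinely occur.
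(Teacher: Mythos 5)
First, a point of comparison: the paper does not actually prove this theorem --- it is quoted from Deprez \cite{De} --- so the only material to measure your outline against is the algebraic setup (\ref{d1})--(\ref{D2}) that the paper derives and Deprez's original argument; your outline does follow that standard route. Your non-flat branch is correct as far as it goes: by (\ref{d2}), linear dependence of $h(X_1,X_1)-h(X_2,X_2)$ and $h(X_1,X_2)$ is indeed equivalent to $R^{\bot}=0$, and when they are independent the scalar relations forced by (\ref{D2}) (your list omits $\sum_{\alpha}h_{22}^{\alpha}(h_{22}^{\alpha}-h_{11}^{\alpha})=2K$, which is needed), combined with the Gauss equation (\ref{B14}), give $\left\Vert h(X_1,X_2)\right\Vert^{2}=K>0$, $\left\Vert h(X_1,X_1)\right\Vert^{2}=\left\Vert h(X_2,X_2)\right\Vert^{2}=4K$ and $\left\langle h(X_1,X_1),h(X_2,X_2)\right\rangle =2K$, from which isotropy and $\left\Vert H\right\Vert^{2}=3K$ do follow.

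There are, however, two genuine gaps. First, your plan to separate case (i) from case (iii) inside the non-flat branch by the dimension of the first normal space cannot succeed: the relations just listed force $h(X_1,X_2)\neq 0$ to be orthogonal to the two linearly independent vectors $h(X_1,X_1)$ and $h(X_2,X_2)$, so the first normal space is automatically three-dimensional there and the sphere never occurs in that branch. The sphere actually separates out of the \emph{flat} branch: when $K_{N}=0$ one may choose the frame so that $h(X_1,X_2)=0$, and then the middle equation of (\ref{D2}) still leaves the condition $K\left(h(X_1,X_1)-h(X_2,X_2)\right)=0$, so each point must be either intrinsically flat or umbilic. This residual condition is where (i) is distinguished from the rest of (ii), and it also shows that the converse implication --- which your outline never addresses --- is not automatic, since a flat normal connection alone does not make a surface semi-parallel (a generic surface in $\mathbb{E}^{3}$ is a counterexample). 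Second, concluding that a three-dimensional first normal space ``forces the surface into an $\mathbb{E}^{5}$'' requires proving that the first normal bundle is parallel along $M$ (Erbacher's reduction-of-codimension theorem); invoking the Codazzi equation (\ref{B7}) in general terms does not by itself supply this step.
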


\section{Meridian Surfaces in $\mathbb{E}^{4}$}

In the following sections, we will consider the meridian surfaces in $%
\mathbb{E}^{4}$ which is first defined by Ganchev and Milousheva \cite{GM1}.
The meridian surfaces are one-parameter systems of meridians of the standard
rotational hypersurface in $\mathbb{E}^{4}$.

Let $\{e_{1},e_{2},e_{3},e_{4}\}$ be the standard orthonormal frame in $%
\mathbb{E}^{4}$, and $S^{2}(1)$ be a 2-dimensional sphere in $\mathbb{E}%
^{3}=span\{e_{1},e_{2},e_{3}\}$, centered at the origin $O$. We consider a
smooth curve $C:r=r(v),\,v\in J,\, \,J\subset \mathbb{R}$ on $S^{2}(1)$,
parameterized by the arc-length ($\left \Vert (r^{\prime
}{})^{2}(v)\right
\Vert =1$). We denote $t=r^{\prime }$ and consider the
moving frame field $\{t(v),n(v),r(v)\}$ of the curve $C$ on $S^{2}(1)$. With
respect to this orthonormal frame field the following Frenet formulas hold
good: 
\begin{equation}
\begin{array}{l}
\vspace{2mm}r^{\prime }=t; \\ 
\vspace{2mm}t^{\prime }=\kappa \,n-r; \\ 
\vspace{2mm}n^{\prime }=-\kappa \,t,%
\end{array}
\label{c1}
\end{equation}%
where $\kappa $ is the spherical curvature of $C$.

Let $f=f(u),\, \,g=g(u)$ be smooth functions, defined in an interval $%
I\subset \mathbb{R}$, such that

\begin{equation}
(f^{\prime })^{2}(u)+(g^{\prime })^{2}(u)=1,\, \,u\in I.  \label{c2}
\end{equation}

In \cite{GM1} Ganchev and Milousheva constructed a surface $M^{2}$ in $%
\mathbb{E}^{4}$ in the following way: 
\begin{equation}
M^{2}:X(u,v)=f(u)\,r(v)+g(u)\,e_{4},\quad u\in I,\,v\in J.  \label{c3}
\end{equation}

The surface $M^{2}$ lies on the rotational hypersurface $M^{3}$ in $\mathbb{E%
}^{4}$ obtained by the rotation of the meridian curve $\alpha :u\rightarrow
(f(u),g(u))$ around the $Oe_{4}$-axis in $\mathbb{E}^{4}$. Since $M^{2}$
consists of meridians of $M^{3}$, we call $M^{2}$ a \textit{meridian surface 
}\cite{GM1}. If we denote by $\kappa _{\alpha }$ the curvature of meridian
curve $\alpha ,$ i.e., 
\begin{equation}
\kappa _{\alpha }=f^{\prime }(u)g^{\prime \prime }(u)-f^{\prime \prime
}(u)g(u)=\frac{-f^{\prime \prime }(u)}{\sqrt{1-f^{\prime 2}(u)}}.  \label{C4}
\end{equation}

We consider the following orthonormal moving frame fields, $%
X_{1},X_{2},N_{1},N_{2}$ on the meridian surface $M^{2}$ such that $%
X_{1},X_{2}$ are tangent to $M^{2}$ and $N_{1},N_{2}$ are normal to $M^{2}$.
The tangent space of $M^{2}$ is spanned by the vector fields: 
\begin{equation}
\begin{array}{l}
\vspace{2mm}\vspace{2mm}X_{1}=\frac{\partial X}{\partial u},\text{ \  \ }%
X_{2}=\frac{1}{f}\frac{\partial X}{\partial v}, \\ 
N_{1}=n(v),\text{ \ }N_{2}=-g^{\prime }(u)\,r(v)+f^{\prime }(u)\,e_{4}.%
\end{array}
\label{c4}
\end{equation}

By a direct computation we have the components of the second fundamental
forms as;%
\begin{equation}
\begin{array}{l}
\vspace{2mm}%
\begin{array}{ll}
h_{11}^{1}=h_{12}^{1}=h_{21}^{1}=0, & h_{22}^{1}={\frac{\kappa }{f}},%
\end{array}
\\ 
\begin{array}{lll}
h_{11}^{2}=\kappa _{\alpha } & h_{12}^{2}=h_{21}^{2}=0, & h_{22}^{2}={\frac{%
g^{\prime }}{f}}.%
\end{array}%
\end{array}
\label{c5}
\end{equation}

Therefore the shape operator matrices of $M^{2}$ are of the form%
\begin{equation}
A_{N_{_{1}}}=\left[ 
\begin{array}{ll}
0 & 0 \\ 
0 & {\frac{\kappa }{f}}%
\end{array}%
\right] ,\text{ }A_{N_{_{2}}}=\left[ 
\begin{array}{ll}
\kappa _{\alpha } & 0 \\ 
0 & {\frac{g^{\prime }}{f}}%
\end{array}%
\right]  \label{c6}
\end{equation}%
and hence we have%
\begin{equation}
\begin{array}{l}
K={\frac{\kappa _{\alpha }g^{\prime }}{f}}, \\ 
\vspace{2mm}K_{N}=0,%
\end{array}
\label{c7}
\end{equation}%
which implies that the meridian surface $M^{2}$ is totally umbilical surface
in $\mathbb{E}^{4}.$

In \cite{GM5} Ganchev and Milousheva constructed three main classes of
meridian surfaces:

I. $\kappa =0;$ i.e. the curve $C$ is a great circle on $S^{2}(1)$. In this
case $N_{1}=const.$ and $M^{2}$ is a planar surface lying in the constant $3$%
-dimensional space spanned by $\left \{ X_{1},X_{2},N_{2}\right \} $.
Particularly, if in addition $\kappa _{\alpha }=0,$ i.e. the meridian curve
is a part of a straight line, then $M^{2}$ is a developable surface in the $%
3 $-dimensional space spanned by $\left \{ X_{1},X_{2},N_{2}\right \} $.

II. $\kappa _{\alpha }=0,$ i.e. the meridian curve is a part of a straight
line. In such case $M^{2}$ is a developable ruled surface. If in addition $%
\kappa =const.$, i.e. $C$ is a circle on $S^{2}(1)$, then $M^{2}$ is a
developable ruled surface in a $3$-dimensional space. If $\kappa \neq
const., $i.e. $C$ is not a circle on $S^{2}(1)$, then $M^{2}$ is a
developable ruled surface in $\mathbb{E}^{4}$.

III. $\kappa _{\alpha }\kappa \neq 0,$ i.e. $C$ is not a circle on $S^{2}(1)$
and $\alpha $ is not a straight line. In this general case the parametric
lines of $M^{2}$ given by (\ref{c3}) are orthogonal and asymptotic.

We proved the following Theorem.

\begin{theorem}
Let $M^{2}$ be a meridian surface in $\mathbb{E}^{4}$ given with the
parametrization (\ref{c3})$.$ Then $M^{2}$ is semi-parallel if and only if \
one of the following holds:

i) $M^{2}$ is a developable ruled surface in $\mathbb{E}^{3}$ or $\mathbb{E}%
^{4}$ which considered in Case II of the classification above,

ii) the curve $C$ is a circle on $S^{2}(1)$ with non-zero constant spherical
curvature and the meridian curve is determined by 
\begin{equation*}
f(u)=\pm \sqrt{u^{2}-2au+2b};\text{ }g(u)=-\sqrt{2b-a^{2}}\ln \left( u-a-%
\sqrt{u^{2}-2au+2b}\right) ,
\end{equation*}%
where $a=const,b=const.$ In this case $M^{2}$ is a planar surface lying in
3-dimensional space spanned by $\left \{ X_{1},X_{2},N_{2}\right \} $.
\end{theorem}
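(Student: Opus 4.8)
The plan is to substitute the second fundamental form (\ref{c5}) into the three components of $\overline{R}\cdot h$ gathered in (\ref{D2}) and to exploit the very special form of $h$. For a meridian surface one has $h_{12}^{1}=h_{12}^{2}=0$, so $h(X_{1},X_{2})=0$ identically. Inspecting (\ref{D2}), every term of $(\overline{R}(X_{1},X_{2})\cdot h)(X_{1},X_{1})$ and of $(\overline{R}(X_{1},X_{2})\cdot h)(X_{2},X_{2})$ carries a factor $h(X_{1},X_{2})$ or a factor $h_{12}^{\alpha}$, so both of these components vanish automatically. The whole system thus collapses to its middle line, which, using $h(X_{1},X_{2})=0$ and the curvature identities (\ref{d3}), takes the form
\[
(\overline{R}(X_{1},X_{2})\cdot h)(X_{1},X_{2})=-K\big(h(X_{1},X_{1})-h(X_{2},X_{2})\big).
\]

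I would then expand the right-hand side in the orthonormal normal frame by means of (\ref{c5}), namely $h(X_{1},X_{1})=\kappa_{\alpha}N_{2}$ and $h(X_{2},X_{2})=\frac{\kappa}{f}N_{1}+\frac{g'}{f}N_{2}$. Since $N_{1}$ and $N_{2}$ are independent, the semi-parallelity condition $\overline{R}\cdot h=0$ is equivalent to the two scalar equations
\[
K\,\frac{\kappa}{f}=0,\qquad K\Big(\kappa_{\alpha}-\frac{g'}{f}\Big)=0.
\]
Everything now reduces to a case analysis steered by the Gauss curvature $K=\frac{\kappa_{\alpha}g'}{f}$ recorded in (\ref{c7}). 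If $K\equiv 0$ both equations hold trivially; combined with the unit-speed relation (\ref{c2}) this forces $\kappa_{\alpha}=0$, i.e. the meridian is a straight segment, which is precisely Case II of the classification and yields the developable ruled surfaces of alternative (i). If $K\not\equiv 0$, then on the open set where $K\neq 0$ the two equations force simultaneously $\kappa=0$ and the umbilicity relation $\kappa_{\alpha}=\frac{g'}{f}$; since $\kappa$ is a function of $v$ alone this propagates to $\kappa\equiv 0$, so $N_{1}$ is constant and $M^{2}$ lies in the fixed $3$-space spanned by $\{X_{1},X_{2},N_{2}\}$, placing us in Case I.

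It remains to integrate the profile relation $\kappa_{\alpha}=\frac{g'}{f}$, and I expect this to be the main obstacle. Rewriting (\ref{C4}) in the convention-free form $\kappa_{\alpha}=-f''/g'$ (which follows from differentiating (\ref{c2})) and substituting it converts the relation into an autonomous second-order equation for $f$; the reduction $p=f'$, $f''=p\,dp/df$ makes it separable, and this quadrature together with a second integration for $g$ produces elementary closed forms for the pair $f(u),g(u)$ as stated in alternative (ii), up to two integration constants $a,b$. The delicate steps are the choice of branch for $g'$ and the fixing of these constants so that (\ref{c2}) is respected and the stated closed forms are recovered; the sign bookkeeping in (\ref{C4}) is exactly where care is required, since it decides which profile the relation $\kappa_{\alpha}=g'/f$ integrates to. A concluding consistency check with the Codazzi equation (\ref{B7}) confirms that the relation is integrable and that no semi-parallel meridian surfaces with $K\neq 0$ other than those listed occur.
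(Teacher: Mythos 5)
Your reduction coincides with the paper's proof step for step: $h(X_{1},X_{2})=0$ kills the first and third components of (\ref{D2}), the middle one becomes $-K\bigl(h(X_{1},X_{1})-h(X_{2},X_{2})\bigr)$, and splitting along $N_{1},N_{2}$ gives the two scalar conditions $K\kappa /f=0$ and $K(\kappa _{\alpha }-g^{\prime }/f)=0$, followed by the same dichotomy ($K=0$ yielding Case II, $K\neq 0$ yielding $\kappa =0$ together with $\kappa _{\alpha }=g^{\prime }/f$). Up to that point the argument is correct and identical in substance to the paper's.

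The genuine gap is the step you defer: you assert, without computing, that integrating $\kappa _{\alpha }=g^{\prime }/f$ ``produces elementary closed forms \dots\ as stated in alternative (ii).'' Carrying out your own plan shows that it does not. Since $\kappa _{\alpha }=-f^{\prime \prime }/g^{\prime }$ and $(g^{\prime })^{2}=1-(f^{\prime })^{2}$, the relation $\kappa _{\alpha }=g^{\prime }/f$ is exactly $ff^{\prime \prime }-(f^{\prime })^{2}+1=0$ (the equation the paper also writes down), and the substitution $p=f^{\prime }$, $f^{\prime \prime }=p\,dp/df$ integrates it to $(f^{\prime })^{2}=1+Cf^{2}$. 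The constraint $(g^{\prime })^{2}=-Cf^{2}\geq 0$ forces $C=-c^{2}\leq 0$, whence $f(u)=\frac{1}{c}\sin \bigl(c(u-u_{0})\bigr)$ and $g(u)=\mp \frac{1}{c}\cos \bigl(c(u-u_{0})\bigr)+d$: the meridian is a circular arc and $M^{2}$ is a piece of a round $2$-sphere of radius $1/c$, in agreement with item (i) of Deprez's theorem. The function $f(u)=\pm \sqrt{u^{2}-2au+2b}$ appearing in the statement instead satisfies $(ff^{\prime })^{\prime }=1$, i.e. $ff^{\prime \prime }+(f^{\prime })^{2}-1=0$, which is the integral of $\kappa _{\alpha }=-g^{\prime }/f$ and describes a catenary, so the corresponding surface is a catenoid --- minimal but not semi-parallel. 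The sign ambiguity you flag as ``where care is required'' is therefore not a bookkeeping detail but the decisive point: performed honestly, the quadrature does not land on the formulas of alternative (ii), so the proposal as written does not establish the stated theorem; the integration must be carried out explicitly and the resulting discrepancy confronted rather than asserted away.
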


\begin{proof}
Let $M^{2}$ be a meridian surface in $\mathbb{E}^{4}$ given with the
parametrization (\ref{c3}). Then by the use of (\ref{B5}) with (\ref{c5}) we
see that%
\begin{eqnarray}
h(X_{1},X_{2}) &=&0,  \label{c8} \\
h(X_{1},X_{1})-h(X_{2},X_{2}) &=&-{\frac{\kappa }{f}N_{1}}+\left( \kappa
_{\alpha }-{\frac{g^{\prime }}{f}}\right) N_{2}.  \notag
\end{eqnarray}%
Further, substituting (\ref{c8}) and (\ref{c5}) into (\ref{D2}) and after
some computation one can get%
\begin{eqnarray*}
(\overline{R}(X_{1},X_{2})\cdot h)(X_{1},X_{1}) &=&0, \\
(\overline{R}(X_{1},X_{2})\cdot h)(X_{1},X_{2}) &=&-K\left( -{\frac{\kappa }{%
f}N_{1}}+\left( \kappa _{\alpha }-{\frac{g^{\prime }}{f}}\right)
N_{2}\right) , \\
(\overline{R}(X_{1},X_{2})\cdot h)(X_{2},X_{2}) &=&0.
\end{eqnarray*}%
Suppose that, $M^{2}$ is semi-parallel then by definition 
\begin{equation*}
(\overline{R}(X_{1},X_{2})\cdot h)(X_{i},X_{j})=0,\text{ }1\leq i,j\leq 2,
\end{equation*}%
is satisfied. So, we get%
\begin{equation*}
K\left( -{\frac{\kappa }{f}}N_{1}+\left( \kappa _{\alpha }-{\frac{g^{\prime }%
}{f}}\right) N_{2}\right) =0.
\end{equation*}%
Hence, two possible cases occur; $K=0$ or $\kappa =0$ and $\kappa _{\alpha }-%
{\frac{g^{\prime }}{f}}=0.$ For the first case $\kappa _{\alpha }=0,$ i.e.
the meridian curve is a part of a straight line. In such case $M^{2}$ is a
developable ruled surface given in the Case II. For the second case $\kappa
=0$ means that the curve $c$ is a great circle on $S^{2}(1).$ In this case $%
M^{2}$ lies in the 3-dimensional space spanned by $\left \{
X_{1},X_{2},N_{2}\right \} .$ Further, using (\ref{C4}) the equation $\kappa
_{\alpha }-{\frac{g^{\prime }}{f}}=0$ can be rewritten in the form%
\begin{equation*}
f(u)f^{\prime \prime }(u)-(f^{\prime }(u))^{2}+1=0,
\end{equation*}%
which has the solution 
\begin{equation}
f(u)=\pm \sqrt{u^{2}-2au+2b}.  \label{c9}
\end{equation}%
Consequently, by substituting (\ref{c9}) into (\ref{c2}) one can get 
\begin{equation*}
g(u)=-\sqrt{2b-a^{2}}\ln \left( u-a-\sqrt{u^{2}-2au+2b}\right) .
\end{equation*}%
This completes the proof of the theorem.
\end{proof}

\end{document}